\newtheorem{thm}{Theorem}
\newtheorem{lemma}{Lemma}
\newtheorem{prop}{Proposition}
\newcommand{\R}{\mathbb{R}}
\newcommand{\Z}{\mathbb{Z}}
\newcommand{\T}{\mathcal{T}}
\newcommand{\U}{\mathcal{U}}
\newcommand{\B}{\mathcal{B}}
\title{Finite Voronoi decompositions of infinite vertex transitive graphs}
\author{Hilary Finucane}
\date{}
\begin{document}

\maketitle

\begin{abstract}
In this paper, we consider the Voronoi decompositions of an arbitrary infinite vertex-transitive graph $G$. In particular, we are interested in the following question: what is the largest number of Voronoi cells that must be infinite, given sufficiently (but finitely) many Voronoi sites which are sufficiently far from each other? We call this number the survival number $s(G)$. 
The survival number of a graph has an alternative characterization in terms of covering, 
which we use to show that $s(G)$ is always at least two.

The survival number is not a quasi-isometry invariant, but it remains open whether finiteness of the $s(G)$ is. We show that all vertex-transitive graphs with polynomial growth have a finite $s(G)$; vertex-transitive graphs with infinitely many ends have an infinite $s(G)$; the lamplighter graph $LL(\Z)$, which has exponential growth, has a finite $s(G)$; and the lamplighter graph $LL(\Z^2)$, which is Liouville, has an infinite $s(G)$.

\end{abstract}

\section{Introduction}

In \cite{competition}, Benjamini introduces a model of competition on an infinite vertex-transitive graph. In this model, a parameter $m \geq 1$ is fixed, two sets $X_0$ and $Y_0$ are initialized to contain vertices $x_0$ and $y_0$, respectively, and then $X_n$ contains all vertices within distance $m$ of $X_{n-1}$ which are not in $Y_{n-1}$, and $Y_n$ contains all vertices within distance 1 of $Y_{n-1}$ which are not in $X_n$. $X := \cup X_n$ and $Y := \cup Y_n$. On $\Z^d$,  $Y$ is finite for all $m\geq 2$ regardless of the starting points $x_0$ and $y_0$. Benjamini shows that on a hyperbolic Cayley graph, for all $m$ there exist $x_0$ and $y_0$ such that $X$ and $Y$ will both be infinite. It is open to determine for which non-hyperbolic Cayley graphs this is true.

In the case that $m=1$, we can adapt the process so that if there exists an $n$ for which a vertex $v$ is a neighbor of both $X_n$ and $Y_n$ but is not contained in either set, then we add $v$ to both $X_{n+1}$ and to $Y_{n+1}$. In this adapted process, $X$ and $Y$ give us a Voronoi decomposition of the graph, where $X$ and $Y$ are the Voronoi cells of $x_0$ and $y_0$, respectively; that is, $X = \{v:d(v,x_0) \leq d(v,y_0)\}$, and $Y = \{v:d(v,y_0) \leq d(v,x_0)\}$. To see this, suppose without loss of generality that $v$ is a vertex with $d(v,x_0) \leq d(v,y_0)$. Then there is a geodesic from $x_0$ to $v$, and each point on the geodesic is at least as close to $x_0$ as to $y_0$. (Otherwise, $v$ would be closer to $y_0$ than $x_0$.) Let $u_i$ be the $i$-th vertex in the geodesic; before step $i$ in the process, $u_i$ cannot be in $Y$ because $d(u_i,y_0) \geq d(u_i,x_0) = i$, so at step $i$, by induction $u_i$ will be added to $X$, and so $v$ will also be added to $X$.

The first question we consider is whether, in the case that $m=1$, $X$ and $Y$ are always both infinite. If there exists an infinite two-sided geodesic passing through $x_0$ and $y_0$, then the answer to this question is clearly yes. However, such a geodesic does not exist for every pair of points in an infinite vertex-transitive graph. Consider for example an infinite ladder with both diagonals added to each square face; two vertices directly across from each other are not both contained in any infinite geodesic. Restricting to Cayley graphs does not solve the problem; there are pairs of vertices not contained in an infinite geodesic in the lamplighter graph $LL(\Z)$ \cite{meier}. In fact, the following question remains open: call two geodesics equivalent if one is contained in a bounded neighborhood of the other. Are there infinitely many non-equivalent two-sided geodesics in any one-ended vertex-transitive graph?

Here, we show that in every infinite vertex-transitive graph, and for every choice $x_0$ and $y_0$ of starting vertices, $X$ and $Y$ must both be infinite (Proposition~\ref{lowerbound}). However, $X$ and $Y$ need not be isometric; in Section~\ref{sec:lowerbound} we describe a simple example due to Gady Kozma (private communication) for which they are not.

We also consider a model in which there are several competing sets, not just two. In particular, we are interested in the maximum $n$ such that for any $k\geq n$ and any set $v_1, \ldots , v_k$ of initial vertices that are pairwise sufficiently far apart, at least $n$ of the resulting Voronoi cells are infinite. This number provides a measure of the size of the underlying graph $G$; we call it the survival number, denoted $s(G)$. The claim above that $X$ and $Y$ are both always infinite is very similar to the claim that $s(G) \geq 2$ for every infinite vertex-transitive $G$; indeed, the two claims follow from a very similar argument.

The survival number has a useful alternative characterization in terms of covering, which we use to prove the above result that $X$ and $Y$ are always both infinite. And though $s(G)$ is not quasi-isometry invariant, it remains open whether finiteness of $s(G)$ is. We show that $s(G)$ is finite for all vertex-transitive graphs with polynomial growth, infinite when $G$ is vertex-transitive and has infinitely many ends, finite when $G$ is the lamplighter group on $\Z$ (which has exponential growth), and infinite when $G$ is the lamplighter group on $\Z^2$ (which is Liouville). 

In Section~\ref{preliminaries}, we define some key terms. In Section~\ref{sec:survival}, we provide a formal definition of the survival number, prove the equivalent definition, and generalize our definition to arbitrary metric spaces. In Section~\ref{sec:lowerbound}, we show that $s(G)$ is at least two for all infinite vertex-transitive graphs, and that this lower bound is achieved in all graphs with linear growth. In Section~\ref{sec:notinvariant}, we show that $s(G)$ is not quasi-isometry invariant. In Section~\ref{sec:infinite}, we prove several results about the finiteness of $s(G)$. In Section~\ref{sec:openquestions}, we provide a list of open problems.


\section{Preliminaries}
\label{preliminaries}
A graph $G = (V,E)$ is {\em vertex-transitive} if for all $u, v \in V$, there exists an automorphism $\phi$ of $G$ such that $\phi(u) = v$. We will assume all of our graphs are infinite vertex-transitive graphs.

For $v \in V$, let $B(v,r) = \{u: d(u,v) \leq r\}$ and $\partial B(v,r) = \{u :\ d(u,v) = r\}$. A graph $G$ has {\em growth function} $f$ if $\bigl| B(v,r) \bigr| = f(r)$; $f$ is independent of $v$ by vertex transitivity. We say $G$ has {\em polynomial growth} if $f$ is bounded by a polynomial, and {\em linear growth} if $f$ is bounded by a linear function. A vertex-transitive graph has polynomial growth iff there exists a constant $C$ such that for all $r$, $B(0,r)$ can be covered by $C$ balls of radius $r/2$ \cite{doubling}.

The {\em number of ends} of an infinite vertex-transitive graph is equal to the $\limsup$ as $r \rightarrow \infty$ of the number of connected components in $G-B(v,r)$.

An important class of vertex-transitive graphs are {\em Cayley graphs}. If $\Gamma$ is a group and $S$ is a generating set for $\Gamma$ such that $g \in S$ iff $g^{-1} \in S$, then the Cayley graph of $\Gamma$ has as vertices the elements of $\Gamma$, with an edge between $g_1$ and $g_2$ iff $g_1g_2^{-1} \in S$. 

\section{The survival number }
\label{sec:survival}

\subsection{The main definition}
For $T \subset V$ with $|T| < \infty$ and $v \in T$, let 
$$C(v, T) = \{u \in V : d(u,v) \leq d(u, v')\ \forall v' \in T\}.$$ 
In other words, $C(v,T)$ is the set of vertices of $G$ that are as close to $v$ as to any other vertex in $T$; i.e.\ the Voronoi cell of $v$. Because $|V|= \infty$ and $\cup_{v \in T} C(v,T) = V$, at least one of the $C(v,T)$ must be infinite. If $G = \Z$ and $|T|>1$, then two must be infinite.

The survival number of $G$, denoted $s(G)$, is the maximum number of $C(v,T)$ that must be infinite for any $T$ that is sufficiently large and sufficiently spread out. More formally, let 
$$\T(d,n) = \{T \subset V: n \leq |T| < \infty \mbox{ and }d(v_1, v_2)>d\ \forall v_1, v_2 \in T\}$$ 
and
$$I(T) = \bigl|\{v \in T : |C(v,T)| = \infty\}\bigr|.$$
The survival number of $G$ is:
$$s(G) = \sup \{n : \exists d \mbox{ with } I(T) \geq n\ \forall T\in \mathcal{T} (d,n)\}.$$

A more simple definition would remove the requirement that $d(v_1, v_2) > d$ for all $v_1, v_2 \in T$. However, if this requirement is removed, then the resulting number is less robust; for example, it is upper bounded by the degree and so is smaller for the degree-three tree than for $\Z^2$. Having $d(v_1, v_2)>d$ for all $v_1, v_2 \in T$ is a crucial element of the definition of $s(G)$. However, we show below that to check that $s(G) \geq n$, it suffices to check only those $T$ which are of size exactly $n$, rather than of any size greater than $n$.


\subsection{An equivalent definition}
The survival number can also be characterised in terms of covering:

\begin{thm} \label{equiv2} 
$s(G) \geq n$ iff there exists an integer $d$ and an infinite set $R$ of integers such that for any $r \in R$, there is no way to cover $\partial B(0,r)$ with $n-1$ balls of radius less than $r$ whose centers are pairwise at least $d$ apart from each other and from $v$. \end{thm}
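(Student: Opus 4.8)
The plan is to reduce both sides to statements about a single Voronoi cell on large spheres, using two elementary facts. First, a ``blocking is permanent'' fact: for finite $T$ and $v\in T$, if $C(v,T)\cap\partial B(v,r)=\emptyset$ for even one $r$, then $C(v,T)\subseteq B(v,r-1)$ is finite; this is the geodesic argument already used in the introduction (a vertex at distance $r$ along a geodesic from $v$ to a point of $C(v,T)$ is again in $C(v,T)$). Second, since $V$ is infinite and $T$ finite, the union of the finite cells of $T$ is a finite set, so for all large $r$ the sphere $\partial B(0,r)$ lies in the union of the infinite cells of $T$.

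For the forward direction I would take $d_0$ witnessing $s(G)\geq n$, set $d=d_0+1$ and $R=\{1,2,\dots\}$, and show no $r\in R$ is coverable. A cover of $\partial B(0,r)$ by $n-1$ balls of radius $<r$ with centers $w_1,\dots,w_{n-1}$ pairwise $\geq d$ apart and $\geq d$ from $0$ forces the $w_i$ to be distinct and $\neq 0$, so $T=\{0,w_1,\dots,w_{n-1}\}\in\T(d_0,n)$ has exactly $n$ elements. Every point of $\partial B(0,r)$ is within $r-1$ of some $w_i$, hence strictly closer to that $w_i$ than to $0$, so $C(0,T)$ misses $\partial B(0,r)$ and (first fact) is finite; thus $I(T)\leq n-1$, contradicting $s(G)\geq n$.

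For the converse I would show this same $d$ witnesses $s(G)\geq n$. If some $T\in\T(d,n)$ had $I(T)\leq n-1$, then $T$ has a site with a finite cell; using vertex-transitivity I apply an automorphism taking it to $0$ (this preserves distances, hence membership in $\T(d,n)$ and the value of $I$), so $0\in T$ and the union of all finite cells of $T$, in particular $C(0,T)$, lies in some $B(0,R_0)$. Now fix $r\in R$ with $r>R_0$ and take $u\in\partial B(0,r)$; let $\delta$ be the distance from $u$ to its nearest site. Then $\delta\leq d(u,0)=r$, and $\delta=r$ would put $u\in C(0,T)\subseteq B(0,R_0)$, impossible, so $\delta\leq r-1$; and since $r>R_0$, $u$ lies in some infinite cell $C(v,T)$, and then $d(u,v)=\delta\leq r-1$. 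Hence $\partial B(0,r)$ is covered by the at most $n-1$ balls $B(v,r-1)$ over the infinite-cell sites $v$, whose centers lie in $T$ and so are pairwise $>d$ apart and $>d$ from $0$, with radius $r-1<r$ (if there are fewer than $n-1$ such balls, pad with radius-$0$ balls around far-away vertices, possible since $G$ is infinite). This contradicts the covering hypothesis at $r$.

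The step to watch is the converse: the covering condition insists on balls of radius strictly less than $r$, whereas a Voronoi cell generically meets $\partial B(0,r)$ only within a ball of radius exactly $r$. Placing $0$ at a finite-cell site is precisely what removes this off-by-one --- once $C(0,T)$ is bounded, every point of a far sphere is strictly closer to its own nearest site than to $0$ --- and it is also what lets the argument run for $T$ of arbitrary size rather than only $|T|=n$. Everything else is bookkeeping with the definitions.
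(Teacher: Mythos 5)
Your proof is correct, and it runs on the same underlying duality as the paper's (a cover of a sphere by $n-1$ separated balls of radius $<r$ is the same data as a Voronoi configuration whose cell at the center is finite, via the geodesic ``blocking is permanent'' fact), but it is organized differently in a way worth noting. Your direction ``$s(G)\geq n$ implies the covering condition'' is essentially the contrapositive of the paper's second paragraph, with the same construction (cover centers plus the center vertex form a configuration in $\T(d_0,n)$ with a finite cell); your choice $d=d_0+1$ to reconcile ``at least $d$'' with ``greater than $d_0$'', and taking the ball centers among the sites of $T$ so the separation constraints are inherited, are sound bookkeeping. The converse is where you genuinely diverge: the paper argues directly that for a size-$n$ set $U\in\U(d,n)$ every cell meets $\partial B(v,r)$ for each $r\in R$, and therefore needs Lemma~\ref{equiv1} to pass from size-exactly-$n$ sets to all of $\T(d,n)$, whereas you argue by contradiction from a single sufficiently large $r\in R$, using that the union of the finite cells is a finite set; this handles $T$ of arbitrary size at once, so your proof of the theorem bypasses Lemma~\ref{equiv1} entirely (the paper still uses that lemma elsewhere), at the cost of needing the padding-with-far-away-radius-zero-balls remark, which you correctly supply. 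One shared informality rather than a gap: you take a witness $d_0$ with $I(T)\geq n$ for all $T\in\T(d_0,n)$ directly from $s(G)\geq n$, while the supremum definition literally only provides such a witness for some $n'\geq n$; the missing monotonicity is easy (extend any configuration by far-away sites, which only shrinks cells), and the paper's own conclusion ``so $s(G)<n$'' leans on the same unstated fact.
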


Before proving Theorem~\ref{equiv2}, we will prove the following lemma. Let $\U(d,n) = \T(d,n) - \T(d,n+1)$ be the set of subsets of $V$ of size $n$ whose elements are at pairwise distance at least $d$ from each other. 
\begin{lemma} \label{equiv1}
$s(G) = \sup \{n : \exists d \mbox{ with }I(U) = n\ \forall U\in \U(d,n)\}.$
\end{lemma}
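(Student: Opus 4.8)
The plan is to show the two suprema are equal by proving each bounds the other. Write $s'(G)$ for the right-hand supremum. One direction is essentially immediate: if $I(T) \geq n$ for all $T \in \T(d,n)$, then in particular $I(U) \geq n$ for all $U \in \U(d,n) \subseteq \T(d,n)$; but $|U| = n$ forces $I(U) \leq n$, so $I(U) = n$, giving $s(G) \leq s'(G)$. The content is the reverse inequality: from the hypothesis that $I(U) = n$ for all $U \in \U(d,n)$ (for some $d$), deduce that there is a $d'$ with $I(T) \geq n$ for \emph{all} $T \in \T(d', n)$, not just those of size exactly $n$.

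For that reverse direction, the key step is a monotonicity/restriction argument: given a large spread-out set $T$ with $|T| = k > n$, I want to locate a size-$n$ subset $U \subseteq T$ such that each $v \in U$ whose Voronoi cell $C(v,U)$ is infinite also has $C(v,T)$ infinite — or more precisely, such that $I(T) \geq I(U) = n$. The natural idea is the opposite inclusion on cells: adding sites can only shrink Voronoi cells, so $C(v,T) \subseteq C(v,U)$ when $U \subseteq T$, which is the wrong direction for transferring infiniteness downward. So instead I would argue contrapositively or build $U$ carefully. Concretely: suppose $T \in \T(d,k)$ has $I(T) \leq n-1$, i.e.\ at most $n-1$ sites in $T$ have infinite cells; call these the "surviving" sites $v_1,\dots,v_{n-1}$ (pad with arbitrary far-apart sites if fewer). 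The finitely many non-surviving cells are each finite, hence their union is a finite set $F$; outside a large ball all of $V$ is partitioned among the surviving cells. Now pick $U$ to consist of the $\le n-1$ surviving sites together with one additional site $w \in T$ chosen far from all of them and "deep inside" one surviving cell, so that in the decomposition for $U$ the site $w$ captures only a bounded region and still has a finite cell — contradicting $I(U) = n$. Choosing $w$ and controlling $C(w,U)$ using that $w$ lies well inside $C(v_i,T)$ for some $i$ is where the real work is; I'd use that the surviving cells, being infinite and translation-homogeneous-ish, have points arbitrarily far from the finite set $F$ and from the other sites, and that a geodesic from such a deep point toward $v_i$ stays in $C(v_i,T)$, which bounds how far $C(w,U)$ can reach.

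The main obstacle I anticipate is precisely controlling $C(w,U)$: it is not automatic that sitting deep inside $C(v_i,T)$ makes $w$'s cell small in the $U$-decomposition, since $U$ has fewer competitors than $T$, so $C(w,U) \supseteq C(w,T)$ could a priori be large. The fix should be to choose $U = \{v_1,\dots,v_{n-1}, w\}$ with $w$ chosen not merely deep in one surviving cell but such that $w$ is "shielded" on all sides by the $v_i$'s — e.g.\ so that every sufficiently long geodesic from $w$ must pass closer to some $v_i$ than to $w$, which can be arranged because the $v_i$ together already dominate everything outside the finite set $F$, and $w$ can be taken inside $F$-complement at bounded distance from the "boundary walls" between surviving cells. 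I would also need to handle the bookkeeping that $U$ must have all pairwise distances $> d$: this just requires choosing $w$ at distance $> d$ from each $v_i$, which is possible since surviving cells are infinite. Finally, if $I(T) \leq n-1$ happens with strictly fewer than $n-1$ surviving sites, the same construction with more "filler" deep sites $w_1, w_2, \dots$ gives a $U \in \U(d', n)$ with $I(U) < n$, again contradicting the hypothesis, and this pins down the needed $d'$ (a function of $d$ and $n$ via the diameters of the finitely many finite cells, though of course finiteness is uniform enough by vertex-transitivity that a single $d'$ works).
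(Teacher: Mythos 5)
Your easy direction and the overall shape of the argument (by contradiction: from a spread-out $T$ with $I(T)\le n-1$, manufacture a size-$n$ configuration with a finite cell) are the same as the paper's, and you even isolate the right finite set $F=\bigcup_{v\in T'}C(v,T)$ and the right fact that the surviving sites dominate everything outside $F$. But the decisive step --- producing one extra site $w$, at distance $>d$ from the survivors, whose cell in $U$ is finite --- is never established, and your candidate choices do not work. Although you write ``$w\in T$'' at one point, your description (deep inside a surviving cell, in the complement of $F$, near the boundary walls) forces $w$ to be a freshly inserted vertex, and for such a $w$ the finiteness of $C(w,U)$ can simply fail: in $\Z^2$ with standard generators, take sites at $(\pm R,0),(0,\pm R)$ and the origin, so the origin is the unique non-survivor; a new site placed far out inside the cell of $(R,0)$ captures an infinite cell in the reduced configuration. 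Your proposed ``shielding'' fix leans on the domination fact, but that fact only compares points outside $F$ with the \emph{non-surviving sites of $T$}; it says nothing about comparisons with a new site not in $T$, so the existence of a suitably shielded $w$ is an unproved (and nontrivial) claim. The $d'$ bookkeeping is also off: you let $d'$ depend on the diameters of the finite cells of $T$, but $T$ is quantified after $d'$, and those diameters admit no uniform bound over configurations (vertex-transitivity does not help, since the cells depend on the whole configuration, not on a basepoint).

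The missing idea, which makes all of this immediate, is to take the fillers from $T$ itself: let $v_1,\dots,v_m$ be the sites of $T$ with infinite cells and let $v_{m+1},\dots,v_n$ be any $n-m$ elements of $T'=\{v\in T:|C(v,T)|<\infty\}$ (possible since $|T|\ge n$). Then $U=\{v_1,\dots,v_n\}\subset T$ lies in $\U(d,n)$ for the \emph{same} $d$, and every $u\notin S=\bigcup_{v\in T'}C(v,T)$ satisfies $d(u,v_i)<d(u,v)$ for some survivor $v_i$ and all $v\in T'$; hence $C(v_j,U)\subset S$ is finite for each $j>m$, so $I(U)\le m<n$, contradicting the hypothesis. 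This is exactly the paper's proof; your version needs either this choice of the extra sites or a genuine existence argument for your shielded $w$, which you have not given.
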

\begin{proof}[Proof of Lemma~\ref{equiv1}]
We will show that for any $n$ and $d$, $I(T) \geq n$ for all $T \in \T(d,n)$ iff $I(U) = n$ for all $U \in \U(d,n)$. Since $\U(d,n) \subset \T(d,n)$, $(\Rightarrow)$ is clear. 

To prove $(\Leftarrow)$, suppose $I(U) = n$ for all $U \in \U(d,n)$, but there exists an $T \in \T(d,n)$ with $I(T)<n$. Then let $v_1, \ldots , v_m$ be the vertices in $T$ with $|C(v_i, T)| = \infty$, and let $T' = T - \{v_1, \ldots , v_m\}$; i.e.\ $v \in T'$ iff $|C(v,T)|< \infty$. Let $S$ denote $\cup_{v \in T'}C(v,T)$. Then $S$ is a finite set, and for all vertices $u$ outside of $S$, there exists an $i$ such that $d(u, v_i) < d(u,v)$ for all $v \in T'$.

Let $v_{m+1}, \ldots , v_n$ be $n-m$ arbitrary elements of $T'$, and let $U = \{v_1, \ldots , v_n\}$. Because all but finitely many vertices are strictly closer to one of the $v_1, \ldots , v_m$ than any of the $v_{m+1}, \ldots , v_n$, we have $|C(v_j,U)| < \infty$ for $j = m+1, \ldots, n$. Thus, $I(U) = m < n$, a contradiction.
\end{proof}

\begin{proof}[Proof of Theorem~\ref{equiv2}]
Fix $n$, and suppose that there exists an integer $d$, a vertex $v$, and an infinite set $R$ of integers such that for any $r \in R$, there is no way to cover $\partial B(v,r)$ with $n-1$ balls of radius less than $r$ whose centers are pairwise at least $d$ apart from each other and from $v$. By vertex transitivity, if this holds for one vertex $v$ then it holds for any vertex. Then for each $r \in R$, any $U \in \U(d,n)$, and each $v \in U$, there exists a vertex $u$ with $d(u,v) = r$ and $d(u,v') \geq r$ for all $v' \in U$. Each such $u$ is in $C(v,U)$, so $|C(v,U)| = \infty$. This holds for all $v \in U$, so $s(G) \geq n$. 

Conversely, suppose that for all $d$, all $v$ and all but finitely many $r$, there exist $v_1, \ldots , v_{n-1}$ such that $\{v, v_1, \ldots v_{n-1}\} \in \U(d,n)$ and for every $u \in \partial B(v,r)$ there is an $i$ such that $d(u,v_i) < r$. Then in particular, this holds for one such $r$, so for every vertex $w$ with $d(v,w) \geq r$, the shortest path from $w$ to $v$ must path through a point $u \in \partial B(v,r)$. But $u$ is closer to one of the $v_i$ than to $v$, so $w$ must be closer to $v_i$ than to $v$, so $w$ is not in $C(v,U)$. Thus, $|C(v,U)| < \infty$, so $s(G) < n$.  
\end{proof}


\subsection{A continuous version}

The definition of the survival number of a graph extends to any metric space with a transitive isometry group; the only adaptation needed is to redefine $I(T)$ as $\bigl|\{v \in T : C(v,T)\mbox{ is unbounded}\}\bigr|.$ Theorem~\ref{equiv2} also holds, where $R$ is required to be an unbounded set of positive real numbers, rather than an infinite set of integers.

\begin{prop}\label{Rn} $s(\R^n)=n+1$ \end{prop}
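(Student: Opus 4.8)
The plan is to establish $s(\R^n) = n+1$ by proving the two inequalities $s(\R^n) \ge n+1$ and $s(\R^n) \le n+1$ separately, in both cases using the covering characterization of Theorem~\ref{equiv2} adapted to the continuous setting. Throughout, by homogeneity of $\R^n$ under its isometry group (which is clearly transitive), I may fix the site $v$ at the origin, and a ``ball of radius less than $r$'' means an open or closed Euclidean ball; the sphere $\partial B(0,r)$ is the standard $(n-1)$-sphere of radius $r$. The distance parameter $d$ will play essentially no role, since $\R^n$ is scale-invariant: given any candidate covering configuration one can translate and dilate, so the condition ``centers pairwise at least $d$ apart'' is harmless and can be arranged or removed at will. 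The real content is a statement about covering an $(n-1)$-sphere by $n-1$ balls of smaller radius.

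For the lower bound $s(\R^n) \ge n+1$, by Theorem~\ref{equiv2} I must show that for some $d$ and an unbounded set of radii $r$ (in fact all $r$), the sphere $\partial B(0,r)$ cannot be covered by $n$ balls of radius $< r$ with centers at distance $> d$ from each other and from the origin. The key geometric fact is: \emph{an $(n-1)$-sphere of radius $r$ cannot be covered by $n$ open balls each of radius less than $r$.} I would prove this via a Borsuk--Ulam / Lusternik--Schnirelmann-type argument: each ball $B(c_i, \rho_i)$ with $\rho_i < r$ intersects the sphere $S = \partial B(0,r)$ in a set contained in an open hemisphere of $S$ (the one centered at the point of $S$ nearest $c_i$), because a ball of radius $< r$ cannot contain two antipodal points of $S$. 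So a cover of $S$ by $n$ such balls yields a cover of $S^{n-1}$ by $n$ open sets each missing a pair of antipodes; by the Lusternik--Schnirelmann theorem this is impossible — one needs at least $n+1$ such sets. Scaling to achieve the separation $> d$ is immediate (take $r$ large), giving $s(\R^n) \ge n+1$.

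For the upper bound $s(\R^n) \le n+1$, I must exhibit, for every $d$ and every sufficiently large $r$, a configuration of $n+1$ points $v_1,\dots,v_{n+1}$, pairwise and from the origin at distance $> d$, such that the balls $B(v_i, r)$ cover $\partial B(0,r)$ (equivalently, such that the origin's Voronoi cell in $\{0,v_1,\dots,v_{n+1}\}$ is bounded). The natural choice is to place the $v_i$ at the vertices of a large regular $n$-simplex centered at the origin, scaled so the vertices are at some radius $\lambda r$ with $\lambda$ a fixed constant $>1$ chosen large enough (and then $r$ large enough that $\lambda r$-spacing exceeds $d$). One then checks that every point $u$ with $|u| = r$ is strictly closer to some simplex vertex than to the origin: the vertices of a regular simplex point in $n+1$ directions whose ``Voronoi sectors'' cover all of $\R^n$, and for $u$ on the sphere of radius $r$, the vertex $v_i$ whose direction is closest to $u/|u|$ satisfies $|u - v_i| < |u| = r$ once $\lambda$ is large; a short computation with the inner product $\langle u, v_i\rangle$ (which is bounded below by a positive multiple of $|u|\,|v_i|$ since the closest simplex direction makes angle at most $\arccos(-1/n) < \pi/2$... ) makes this precise. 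This bounds $C(0, \{0,v_1,\dots,v_{n+1}\})$, so by Lemma~\ref{equiv1} and Theorem~\ref{equiv2}, $s(\R^n) < n+2$, i.e. $s(\R^n) \le n+1$.

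The main obstacle is the lower bound, specifically pinning down the precise topological input: asserting cleanly that a ball of radius $< r$ meets the $r$-sphere in a set avoiding some antipodal pair, and then invoking the correct covering theorem (Lusternik--Schnirelmann: $S^{n-1}$ cannot be covered by $n$ open sets none of which contains an antipodal pair). I would need to be careful about open versus closed balls and the strictness of the radius inequality (``radius less than $r$'' in the theorem statement is exactly what forces the antipode-avoidance and makes the argument go through). The upper bound is essentially an explicit construction plus elementary estimates, and the irrelevance of $d$ in both directions follows from the scale-invariance of $\R^n$, so those parts should be routine.
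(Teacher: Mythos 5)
Your overall plan coincides with the paper's: lower bound from Lusternik--Schnirelmann, upper bound from an explicit covering of the $r$-sphere by $n+1$ balls of radius less than $r$, with the separation parameter $d$ disposed of by scale invariance (the paper cites Borsuk for the covering and rescales a covering of the unit sphere so the centers are more than $d$ apart). Your lower-bound argument is correct and is essentially an unpacking of the citation the paper uses: a ball of radius $<r$ has diameter $<2r$, so it meets $\partial B(0,r)$ in a set with no antipodal pair, and Lusternik--Schnirelmann forbids covering $S^{n-1}$ by $n$ such sets; the separation constraint only restricts the allowed coverings further, so $s(\R^n)\ge n+1$ follows.

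The upper bound as you wrote it, however, has a concrete error: the simplex construction fails with your choice of scale. Put $v_i=\lambda r w_i$, where $w_1,\dots,w_{n+1}$ are the unit vertex directions of a regular simplex ($\langle w_i,w_j\rangle=-1/n$ for $i\ne j$). For $|u|=r$ one has $|u-v_i|^2=r^2\bigl(1+\lambda^2-2\lambda\langle u/r,w_i\rangle\bigr)$, which is $<r^2$ iff $\lambda<2\langle u/r,w_i\rangle$. At the point $u=-rw_j$ the best available inner product is $\langle -w_j,w_i\rangle=1/n$, so a covering forces $\lambda<2/n$: you need $\lambda$ \emph{small}, not ``a fixed constant $>1$ chosen large enough.'' Indeed, for any $\lambda\ge 1$ and $n\ge 2$ the point $-rw_1$ is at distance $\ge r$ from every $v_i$, so the proposed balls do not cover $\partial B(0,r)$ (pushing the sites far away only enlarges the origin's Voronoi cell); relatedly, the parenthetical $\arccos(-1/n)<\pi/2$ is false ($\arccos(-1/n)>\pi/2$; the covering angle of the simplex directions is $\arccos(1/n)$). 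The fix is easy and keeps your route: since the simplex directions form a tight frame, every unit vector $u$ satisfies $\langle u,w_i\rangle\ge 1/n$ for some $i$, so taking $\lambda=1/n$ gives $|u-v_i|^2\le r^2(1-1/n^2)<r^2$, with centers at distance $\lambda r$ from the origin and $\lambda r\sqrt{2(n+1)/n}$ from each other, both $>d$ once $r$ is large. With that correction your proof is complete, and its upper bound is an explicit substitute for the paper's appeal to Borsuk's theorem.
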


\begin{proof} Borsuk showed that in $\R^n$, a ball (and therefore a sphere) of radius one can be covered by $n+1$ balls of radius strictly less than one \cite{Borsuk}. Let $\epsilon$ be the minimum of the pairwise distances of the centers of these balls to each other and to the origin, and let $R = d/\epsilon$. Then for any $r > R$, a sphere of radius $r$ can be covered by $n+1$ balls whose centers are at distance at least $d$ from each other and from the origin. Lusternik and Schnirelmann proved that $n$ balls do not suffice \cite{LusSch}.
\end{proof}


\section{A lower bound}
\label{sec:lowerbound}

The characterization of Theorem~\ref{equiv2} gives us the following lower bound on $s(G)$:

\begin{prop}\label{lowerbound}Every infinite vertex-transitive graph $G$ has $s(G) \geq 2$. \end{prop}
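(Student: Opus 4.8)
The plan is to use the covering characterization of Theorem~\ref{equiv2} with $n=2$: it suffices to produce an integer $d$ and an infinite set $R$ of radii such that for each $r \in R$, the sphere $\partial B(0,r)$ cannot be covered by a single ball of radius less than $r$ whose center is at distance at least $d$ from $0$. Equivalently, I need to show that for every $r$ (or at least infinitely many $r$, which is what we get by vertex-transitivity), there is no vertex $w$ with $d(0,w) \geq d$ such that every $u \in \partial B(0,r)$ satisfies $d(u,w) < r$. So the whole problem reduces to the following purely geometric statement about an infinite vertex-transitive graph: for every candidate center $w$ far from $0$, there is a point $u$ on the sphere $\partial B(0,r)$ with $d(u,w) \geq r = d(u,0)$, i.e.\ $u$ lies in the Voronoi cell of $0$ relative to $\{0,w\}$ and on the sphere.

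The key step is to find such a $u$ by pushing out along a geodesic. Fix $r$ large. Given any $w$ with $d(0,w)$ large, consider the "bisector" behavior: take a vertex $z$ with $d(0,z) = d(w,z)$ (or as close to equidistant as the graph allows — such a $z$ exists on a shortest path from $0$ to $w$ by the discrete intermediate value property, since consecutive vertices change each distance by at most $1$). From $z$, I want to move away from both $0$ and $w$ while staying on, or returning to, the sphere $\partial B(0,r)$. The cleanest route: let $u$ be a vertex on a geodesic ray-like structure; more concretely, among all vertices at distance exactly $r$ from $0$, I claim one of them is at distance $\geq r$ from $w$. Suppose not: then $\partial B(0,r) \subseteq B(w, r-1)$. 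But then, as in the proof of Theorem~\ref{equiv2}, every vertex at distance $\geq r$ from $0$ would be strictly closer to $w$ than to $0$, forcing $C(0,\{0,w\})$ to be finite — and this would have to hold for \emph{every} sufficiently distant $w$. The contradiction is then extracted from the fact that $G$ is infinite and vertex-transitive: pick $w$ itself very far out, apply the same reasoning centered at $w$ with radius $r' = d(0,w) - r$, and derive that $C(w,\{0,w\})$ is also finite, whence $V = C(0,\cdot) \cup C(w,\cdot)$ is finite, contradicting $|V| = \infty$.

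I expect the main obstacle to be handling the "off by one" integrality issues and making the symmetry argument fully rigorous — in particular, pinning down the right radius $r$ (and the infinite set $R$) so that the bisector vertex $z$ and the sphere $\partial B(0,r)$ interact correctly, and ensuring that "one ball of radius $< r$ cannot cover the sphere" really is equivalent to what I prove about geodesics. The conceptual heart, though, is genuinely short: a single ball strictly smaller than $\partial B(0,r)$ that covers it would let its center "dominate" all of $0$'s far territory, and by running the same argument from that center one collapses the whole vertex set into a finite union, contradicting infiniteness. Vertex-transitivity is what lets me assume the covering failure happens for infinitely many $r$ once it happens for one choice of basepoint, and it is also implicitly what guarantees the graph is locally finite enough for the distance functions to behave like discrete $1$-Lipschitz maps along geodesics.
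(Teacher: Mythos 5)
Your reduction to Theorem~\ref{equiv2} with $n=2$ is the right framing, and your observation that $\partial B(0,r)\subseteq B(w,r-1)$ forces $C(0,\{0,w\})$ to be finite is correct. But the contradiction step has a genuine gap, essentially a quantifier error. The negation of the covering criterion says only that for each relevant $r$ there \emph{exists} some center $w_r$ (depending on $r$) far from $0$ with $\partial B(0,r)\subseteq B(w_r,r-1)$; it does not say this holds ``for every sufficiently distant $w$.'' Consequently, when you ``apply the same reasoning centered at $w$ with radius $r'=d(0,w)-r$,'' all you could extract is that $\partial B(w,r')$ is covered by a ball around \emph{some} vertex $w'$ far from $w$ --- not by $B(0,r'-1)$ --- so you get finiteness of $C(w,\{w,w'\})$, not of $C(w,\{0,w\})$, and the identity $V=C(0,\{0,w\})\cup C(w,\{0,w\})$ never becomes a union of two finite sets. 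In effect, what you must rule out is precisely ``some Voronoi cell of a far-apart pair is finite,'' which is the proposition itself, so the argument as written is circular. The bisector vertex $z$ and the idea of ``pushing out along a geodesic'' are never actually used to produce the needed point $u\in\partial B(0,r)$ with $d(u,w)\geq r$; producing such a point for \emph{every} $w$ is exactly the nontrivial content (and it genuinely needs vertex-transitivity: on a one-way infinite ray, which is not vertex-transitive, a cell can be finite).

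The missing ingredient is the paper's key input: every infinite vertex-transitive (locally finite) graph contains a bi-infinite geodesic through a given vertex $v$ (cited to \cite{3caret}). Its two points on $\partial B(v,r)$ are at distance exactly $2r$ from each other, while any single ball of radius less than $r$ has diameter less than $2r$ and hence cannot contain both; so for every $r$ the sphere cannot be covered by one such ball, no matter where its center lies, and Theorem~\ref{equiv2} with $d=0$ gives $s(G)\geq 2$. If you want to keep your covering-based outline, you need either this geodesic fact or some other device producing two points of $\partial B(0,r)$ at distance $2r$ (or, for a fixed adversarial $w$, one point of the sphere at distance at least $r$ from $w$); the symmetric ``run the argument from $w$'' trick cannot substitute for it.
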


\begin{proof} Let $v$ be a vertex in $G$. Then there exists a bi-infinite geodesic passing through $v$ \cite{3caret}, so for all $r$ there are two vertices in $\partial B(v,r)$ at distance $2r$ from each other. Since no ball of radius $r-1$ can contain both of these vertices, Theorem~\ref{equiv2} with $d = 0$ shows that $s(G) \geq 2$. \end{proof}

Proposition~\ref{lowerbound} is not equivalent to the claim made in the introduction that $X$ and $Y$ are both always infinite. However, because $d=0$ in the proof of the proposition, this claim follows from the same argument.

So for $T = \{c_1, c_2\}$, $C(v_1, T)$ and $C(v_2, T)$ must both be infinite. However, they are not necessarily isomorphic. The following example is due to Gady Kozma: consider $v_1 = a$, $v_2 = b$, and $T = \{v_1, v_2\}$ in the Cayley graph of \mbox{$F_2 * F_3 = <a, b\ |\ a^2= b^3=1>$}. Then $e$ is in $C(a,T)$ and has one neighbor in $C(a,T)$, but there are no vertices of degree one in $C(b,T)$.

Now, we will show that the lower bound of Theorem~\ref{lowerbound} is achieved in all linear-growth graphs. The following result follows from Gromov's theorem, and has an elementary proof in \cite{3caret}:
\begin{prop}
\label{3caret}
Let $G$ be an infinite vertex-transitive graph with linear growth. Then $G$ contains a bi-infinite geodesic $\gamma$, and there exists a constant $k$ such that every vertex of $G$ is within distance $k$ of $\gamma$.
\end{prop}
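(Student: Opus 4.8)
The plan is to obtain the bi-infinite geodesic almost for free and to concentrate all the real work on the bounded-neighborhood claim. By the result of \cite{3caret} already used in the proof of Proposition~\ref{lowerbound}, for any chosen vertex $v$ there is a bi-infinite geodesic $\gamma = (\dots,\gamma_{-1},\gamma_0,\gamma_1,\dots)$ through $v = \gamma_0$; this gives the first assertion. It then remains to find a constant $k$ with $V \subseteq \bigcup_i B(\gamma_i,k)$. The strategy is: first show that linear growth forces $G$ to be quasi-isometric to $\Z$ (equivalently, to have exactly two ends); then transport the coordinate function of $\Z$ back to $G$ along a quasi-isometry and read off $k$ from $\gamma$.

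For the first step I would invoke Gromov's polynomial growth theorem in its vertex-transitive form (Trofimov): $G$ is then quasi-isometric to a Cayley graph of a finitely generated virtually nilpotent group, which by the linearity of the growth must be virtually $\Z$, so $G$ is quasi-isometric to $\Z$. The elementary route --- the one taken in \cite{3caret} --- instead bounds the number of ends by hand. A vertex-transitive graph with more than two ends has infinitely many ends, and a vertex-transitive graph with infinitely many ends is known to have exponential growth; so linear growth rules this out. A one-ended vertex-transitive graph has the property that the complement of every large ball is connected, which allows one to join $\gamma_r$ and $\gamma_{-r}$ --- two points of $\partial B(v,r)$ at distance $2r$ --- by a path avoiding $B(v,r-1)$; running this over a suitable range of radii forces the spheres $\partial B(v,r)$, and therefore $|B(v,r)|$, to grow faster than linearly, a contradiction. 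Either way, $G$ is quasi-isometric to $\Z$.

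Now fix a quasi-isometry $f : V \to \Z$ with constants $(A,B)$, so that $\tfrac1A d(x,y) - B \le |f(x) - f(y)| \le A\,d(x,y) + B$ for all $x,y \in V$, and $\Z$ lies in the $B$-neighborhood of $f(V)$. Since $\gamma$ is a geodesic, $i \mapsto f(\gamma_i)$ is a quasi-isometric embedding of $\Z$ into $\Z$, and such an embedding is coarsely surjective; hence $f(\gamma)$ is $B'$-dense in $\Z$ for some constant $B'$ depending only on $A$ and $B$. Given an arbitrary $w \in V$, choose $i$ with $|f(w) - f(\gamma_i)| \le B'$; then $d(w,\gamma_i) \le A\bigl(|f(w) - f(\gamma_i)| + B\bigr) \le A(B'+B)$, so the constant $k = A(B'+B)$ works, and $v = \gamma_0$ was arbitrary.

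The main obstacle is the first step, converting linear growth into two-endedness. If one is content to cite Gromov's theorem (as the statement of the proposition already indicates), this is immediate; the price of a self-contained argument is the one-ended case, where one must turn ``complements of large balls are connected'' into an honest superlinear lower bound on $|B(v,r)|$ --- this is the technical heart of the argument in \cite{3caret}. Everything downstream of that --- the transport along a quasi-isometry and the final distance estimate --- is routine.
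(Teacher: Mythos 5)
The paper does not actually prove Proposition~\ref{3caret}: it is stated as a known result, attributed to Gromov's theorem and to the elementary argument in \cite{3caret}, so there is no internal proof to match yours against. Your reconstruction is sound and fills in exactly the step the paper leaves implicit: granting that linear growth plus vertex-transitivity gives a graph quasi-isometric to $\Z$ (via Gromov--Trofimov, which is precisely what ``follows from Gromov's theorem'' means here), your transport argument is correct --- $i \mapsto f(\gamma_i)$ is a quasi-isometric embedding of $\Z$ in $\Z$ because $\gamma$ is isometrically embedded, and the final estimate $d(w,\gamma_i) \le A(B'+B)$ follows directly from the lower quasi-isometry inequality. Two soft spots are worth acknowledging explicitly. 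First, the coarse surjectivity of a quasi-isometric embedding $\Z \to \Z$ is true but deserves a line: consecutive images differ by at most $A+B$, each half-line of the domain has image going to infinity and hence (by the bounded-jump intermediate-value argument) coarsely fills a ray, and the two half-lines must fill \emph{opposite} rays, since otherwise one finds $n \gg 0$ and $m \ll 0$ with $|f(\gamma_n)-f(\gamma_m)|$ bounded while $|n-m|$ is large, violating the lower bound. Second, your ``elementary route'' is only a gesture, not a proof: the assertion that one-endedness lets you connect $\gamma_r$ to $\gamma_{-r}$ outside $B(v,r-1)$ does not by itself yield a superlinear lower bound on $|B(v,r)|$ (the connecting path need only have length $\ge 2r$, which is consistent with linear growth), and turning it into one is exactly the technical content of \cite{3caret} that you correctly identify but do not supply. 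So as written, your proof is complete only in the version that cites Gromov--Trofimov, which is the same level of reliance on external structure theory as the paper itself; the genuinely new content you add, the quasi-isometry transport giving the constant $k$, is correct.
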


We will now use Proposition~\ref{3caret} to prove:

\begin{prop} \label{thm:linear} If $G$ is an infinite vertex transitive graph with linear growth then $s(G) = 2$. \end{prop}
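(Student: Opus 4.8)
The plan is to use Proposition~\ref{3caret} to show that $s(G) \leq 2$; since Proposition~\ref{lowerbound} already gives $s(G) \geq 2$, this suffices. Let $\gamma$ be the bi-infinite geodesic guaranteed by Proposition~\ref{3caret}, and let $k$ be the constant such that every vertex of $G$ lies within distance $k$ of $\gamma$. Using the characterization of Theorem~\ref{equiv2}, it is enough to show that for every integer $d$ and all sufficiently large $r$, the sphere $\partial B(v,r)$ \emph{can} be covered by two balls of radius less than $r$ whose centers are at least $d$ from each other and from $v$. The idea is that the coarse geodesic structure forces $\partial B(v,r)$ to lie ``close to the two ends of $\gamma$,'' so two suitably placed balls—one near each end—will do.

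The key steps, in order, are as follows. First, fix a vertex $v$ and, after translating by an automorphism, assume $v$ is within distance $k$ of $\gamma$; pick a vertex $v'$ on $\gamma$ with $d(v,v') \leq k$, so $\gamma$ splits into two geodesic rays $\gamma^+$ and $\gamma^-$ emanating (coarsely) from $v$. Second, given $r$, any vertex $u \in \partial B(v,r)$ is within $k$ of some vertex $w \in \gamma$; since $\gamma = \gamma^+ \cup \gamma^-$, either $w \in \gamma^+$ or $w \in \gamma^-$. Let $p^\pm$ be the point of $\gamma^\pm$ at distance exactly $\lfloor r/2 \rfloor$ from $v'$ along $\gamma$. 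The claim is that if $w \in \gamma^+$ then $d(u, p^+) < r$, and similarly for $\gamma^-$; this partitions $\partial B(v,r)$ into two pieces, each contained in a ball of radius less than $r$ centered at $p^+$ or $p^-$. Third, I must check the radius bound: if $u$ is within $k$ of $w \in \gamma^+$, then $d(u, p^+) \leq k + d(w, p^+)$, and since $w, p^+, v'$ all lie on the geodesic $\gamma$ with $d(v', w) \leq d(v,u) + k = r + k$ and $d(v', p^+) = \lfloor r/2 \rfloor$, we get $d(w, p^+) \leq r + k - \lfloor r/2 \rfloor$ when $w$ is on the far side of $p^+$, hence $d(u,p^+) \lesssim r/2 + 2k < r$ for $r$ large; the case where $w$ lies between $v'$ and $p^+$ is easier since then $d(w,p^+) \leq \lfloor r/2\rfloor$. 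Fourth, the centers $p^+$ and $p^-$ are at distance roughly $r$ from $v$ and $2\lfloor r/2\rfloor$ from each other, both of which exceed $d$ once $r$ is large; so the covering has the required separation, and Theorem~\ref{equiv2} gives $s(G) < 3$, i.e.\ $s(G) = 2$.

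The main obstacle is bookkeeping the additive constants carefully enough to guarantee the covering balls genuinely have radius \emph{strictly} less than $r$, not merely $O(r)$: the naive estimate $d(u,p^+) \leq r + k - \lfloor r/2\rfloor + k$ is about $r/2 + 2k$, which is fine, but one must handle the borderline vertices $w \in \gamma$ that project near $p^\pm$ and make sure every $u \in \partial B(v,r)$ really does get assigned to one of the two balls (the ``either $w\in\gamma^+$ or $w\in\gamma^-$'' dichotomy may have overlap at $v'$, which is harmless). A secondary point to get right is that $\gamma$ passing within $k$ of $v$ rather than through $v$ introduces the shift from $v$ to $v'$; since $k$ is a fixed constant this only perturbs all distances by $O(k)$ and is absorbed for large $r$, but it should be stated explicitly. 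Once these constants are pinned down, the argument is routine.
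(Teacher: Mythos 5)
Your proposal is correct and follows essentially the same route as the paper: invoke Proposition~\ref{3caret} to get the geodesic $\gamma$ with every vertex within $k$ of it, cover $\partial B(v,r)$ by two balls centered far out on the two halves of $\gamma$, and apply Theorem~\ref{equiv2} together with Proposition~\ref{lowerbound}; the paper simply takes $v$ on $\gamma$ and centers the balls at distance $r$ along $\gamma$ with radius $r-1$, while you center them at distance $\lfloor r/2\rfloor$ with radius about $r/2+O(k)$. The only nits are constant slips that you already flag as bookkeeping: $d(v',w)\leq r+2k$ rather than $r+k$, and $p^{\pm}$ are at distance roughly $r/2$ (not $r$) from $v$, which still exceeds $d$ for large $r$.
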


\begin{proof} Let $\phi:\Z \rightarrow G$ be a an isometry from $\Z$ to $\gamma$. Let $v = \phi(0)$. For every vertex $u$, there is an integer $m$ such that $d(u,\phi(m))<k$, and so
\begin{eqnarray*}
\bigl|d(v,u) - |m|\bigr| &=& \bigl|d(v,u) - d(v,\phi(m))\bigr| \\
&\leq& d(u,\phi(m))\\
&<& k
\end{eqnarray*}
So if $d(v,u) = r$ then 
$$m \in [-r-k, -r+k] \cup [r-k,r+k],$$ and 
$$\phi(m) \in B(\phi(-r),k) \cup B(\phi(r),k),$$ so 
$$v \in B(\phi(-r),2k) \cup B(\phi(r),2k).$$ 
Thus, if $r > 2k$, we have that $\partial B(v,r)$ is contained in $B(\phi(-r),r-1) \cup B(\phi(r),r-1)$, and $d(\phi(-r), v), d(\phi(r),v) > r-k$. So for any $d$ we can let $R = \{r \in \Z : r-k \geq d\}$, and apply Theorem~\ref{equiv2} to see that $s(G) \leq 2$. We have $s(G) \geq 2$ from Proposition~\ref{lowerbound}, concluding the proof.
\end{proof}


\section{The survival number is not quasi-isometry invariant.}
\label{sec:notinvariant}

Not only is the survival number not a quasi-isometry invariant; in fact, the survival number of a Cayley graph can depend on the generators chosen.

\begin{prop} \label{Z2}Let $G$ denote the Cayley graph of $\Z^2$ with generating set $\{(\pm 1, 0), (0,\pm 1)\}.$ Then  $s(G) = 4$. \end{prop}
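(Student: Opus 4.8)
The plan is to use the covering characterization of Theorem~\ref{equiv2} together with the observation that for this generating set the graph metric is the $\ell^1$ metric, so that $\partial B(0,r)=\{(x,y):|x|+|y|=r\}$ is the lattice ``diamond'' whose four corners $(\pm r,0)$ and $(0,\pm r)$ are pairwise at distance $2r$.

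For the lower bound $s(G)\ge 4$ I would take $d=0$ and $R=\{r\in\Z:r\ge 1\}$. With $d=0$ the constraints on ball centers are vacuous, and a ball of radius less than $r$ has diameter at most $2(r-1)<2r$, so by the triangle inequality it contains at most one of the four corners of $\partial B(0,r)$. Hence no three such balls can cover $\partial B(0,r)$, and Theorem~\ref{equiv2} yields $s(G)\ge 4$.

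For the upper bound $s(G)\le 4$ I must rule out $s(G)\ge 5$, i.e.\ show that for every integer $d$ and all sufficiently large $r$ one can cover $\partial B(0,r)$ by four balls of radius less than $r$ whose centers are pairwise at least $d$ apart and at least $d$ from $0$. The one mildly clever point is not to center these balls at the corners of the diamond — that would force radius $r$ — but to pull them halfway in: set $a=\lfloor r/2\rfloor$ and use $B((\pm a,0),\,r-a)$ and $B((0,\pm a),\,r-a)$. A direct $\ell^1$ computation shows that a diamond point $(x,y)$ with $x\ge a$ satisfies $|x-a|+|y|=(x-a)+(r-x)=r-a$, so it lies in $B((a,0),r-a)$; since every point of $\partial B(0,r)$ has $\max(|x|,|y|)\ge\lceil r/2\rceil\ge a$, the four balls cover $\partial B(0,r)$ (by the dihedral symmetry of the diamond one may assume $|x|=\max$ and $x\ge 0$). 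Their radii equal $\lceil r/2\rceil<r$, their centers are pairwise at distance $2a$ and at distance $a$ from $0$, so the spreading hypothesis holds once $r\ge 2d$. Theorem~\ref{equiv2} then gives $s(G)<5$, and together with the lower bound $s(G)=4$.

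The whole argument is a direct calculation once Theorem~\ref{equiv2} is in hand; the only things to watch are the distinction between $<$ and $\le$ in ``radius less than $r$'' and the parity of $r$ in the floor/ceiling, neither of which causes trouble. I expect the ``main obstacle,'' such as it is, to be purely one of finding the right covering: noticing that the faces of an $\ell^1$-ball are parallel to the edges of $\partial B(0,r)$, which is exactly what makes the four half-diamonds $B((\pm a,0),r-a)$ and $B((0,\pm a),r-a)$ interlock without gaps.
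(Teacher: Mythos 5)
Your proof is correct and follows the same overall route as the paper: both directions go through Theorem~\ref{equiv2}, and your lower bound (the four corners $(\pm r,0),(0,\pm r)$ of $\partial B(0,r)$ are pairwise at $\ell^1$-distance $2r$, so a ball of radius less than $r$ contains at most one of them, hence three balls cannot cover the sphere) is exactly the paper's argument. Where you diverge is the covering used for the upper bound: the paper asserts that four balls centered at the corner points themselves suffice to cover the sphere, whereas you pull the centers halfway in, to $(\pm\lfloor r/2\rfloor,0)$ and $(0,\pm\lfloor r/2\rfloor)$, with radius $\lceil r/2\rceil$. Your version is actually the more careful one: for even $r$ the edge midpoint $(r/2,r/2)$ is at $\ell^1$-distance exactly $r$ from each of its two nearest corners, so corner-centered balls of radius less than $r$ miss it, while your computation $|x-a|+|y|=(x-a)+(r-x)=r-a$ for the quadrant with $\max(|x|,|y|)=x\ge a$ covers every sphere point exactly, with radius $\lceil r/2\rceil<r$ and centers pairwise $2\lfloor r/2\rfloor$ apart and $\lfloor r/2\rfloor$ from the origin, which meets the separation requirement for all $r$ large relative to $d$. (Only a trivial bookkeeping point remains: the paper's formal definition of $\T(d,n)$ uses strict inequality $d(v_1,v_2)>d$, so take $r$ slightly larger than $2d$ rather than $r\ge 2d$.) In short: same strategy, but your explicit inward-shifted covering closes a small gap that the paper's terse covering claim leaves open.
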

\begin{proof} We will use Theorem~\ref{equiv2}. The points $(0,r), (r,0), (0,-r),(-r,0) \in \partial B(0,r)$ but there is no $v$ such that $\partial B(v,r-1)$ covers more than one of these four points. So for all $r$, four balls of radius $r-1$ are needed to cover a sphere of radius $r$. Since four balls centered at these points suffice to cover the sphere, we have $s(\Z^2) = 4$.
\end{proof}

\begin{prop} \label{Z22}Let $G$ denote the Cayley graph of $\Z^2$ with generating set $\{(\pm 1, 0), (0,\pm 1), (1,1), (-1, -1)\}.$ Then  $s(G) = 3$. \end{prop}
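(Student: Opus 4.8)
The plan is to apply Theorem~\ref{equiv2}, so the first task is to describe the balls and spheres of $G$. Writing a vertex as a pair $(a,b)\in\Z^2$, a short computation with the generating set shows that $d(0,(a,b))=\max(|a|,|b|)$ when $a,b$ have the same sign and $d(0,(a,b))=|a|+|b|$ when they have opposite signs; equivalently $B(0,r)=\{(a,b):|a|\le r,\ |b|\le r,\ |a-b|\le r\}$, the lattice points of a hexagon, whose boundary $\partial B(0,r)$ is a union of six edges meeting at the corners $(r,r),(r,0),(0,-r),(-r,-r),(-r,0),(0,r)$. I will also use the order-$3$ automorphism $\rho$ of $G$ induced by the linear map $(x,y)\mapsto(-y,x-y)$ (the square of the $60^\circ$ rotation $(x,y)\mapsto(x-y,x)$, which one checks permutes the generating set). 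The map $\rho$ fixes the origin and cyclically permutes $\{(r,r),(-r,0),(0,-r)\}$ and, separately, $\{(r,0),(0,r),(-r,-r)\}$; hence it permutes $\partial B(0,r)$ and shuffles its six edges in two orbits of size three.

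For the lower bound $s(G)\ge 3$, I use the three corners $(r,r),(0,-r),(-r,0)$: they lie on $\partial B(0,r)$ and are pairwise at distance $2r$, so a ball of radius less than $r$ (hence of diameter less than $2r$) contains at most one of them. Thus no two such balls cover $\partial B(0,r)$, and Theorem~\ref{equiv2} with $d=0$ gives $s(G)\ge 3$. (This is the same idea as Proposition~\ref{Z2}, with three ``poles'' in place of four.)

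For the upper bound $s(G)\le 3$, I exhibit, for every sufficiently large $r$, a cover of $\partial B(0,r)$ by three balls of radius $r-1$ whose centers are pairwise at distance $\Theta(r)$ and at distance $\Theta(r)$ from the origin; the converse direction of Theorem~\ref{equiv2} then rules out $s(G)\ge 4$. I take $B_1=B\big((r-1,\,r-2),\,r-1\big)$ together with $B_2=\rho B_1$ and $B_3=\rho^2 B_1$. One checks directly from the hexagon description that $B_1$ contains the entire ``right'' edge $\{(r,t):0\le t\le r\}$, contains the part of the ``top'' edge nearest the corner $(r,r)$, and contains the part of the ``lower-right'' edge nearest the corner $(r,0)$, and --- the key point --- that the numbers of lattice points $B_1$ covers on these last two edges sum to exactly the number of lattice points on a single edge, so the two partial arcs are complementary. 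Since $\rho$ carries the right, top, and lower-right edges onto the other three edges and is an isometry, $B_2$ and $B_3$ then supply the remaining whole edge and fill in precisely the complementary portions of the top and lower-right edges; running this through all six edges gives $\partial B(0,r)\subseteq B_1\cup B_2\cup B_3$. Finally the three centers $(r-1,r-2)$, $\rho(r-1,r-2)=(2-r,1)$, $\rho^2(r-1,r-2)=(-1,1-r)$ are pairwise at distance $2r-3$ and each at distance $r-1$ from $0$, so for a given $d$ it suffices to restrict to those $r$ with $r-1\ge d$.

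The main obstacle is the upper-bound construction. Because the radius must be \emph{strictly} less than $r$, three balls centered at alternate corners just barely fail --- each misses the three corners it is not centered at --- so the centers must be pushed inward and one must verify that the partial edges covered by the three rotated copies overlap with no gap. Arranging the partial-edge coverage of $B_1$ to come out exactly complementary (so that $B_1,\rho B_1,\rho^2 B_1$ leave nothing uncovered) is the one computation that has to be done with care; the metric formula, the structure of $\partial B(0,r)$, and the distances among the three centers are all routine.
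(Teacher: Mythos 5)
Your proof is correct and follows essentially the same route as the paper: both directions go through Theorem~\ref{equiv2}, with extreme points of the hexagonal sphere $\partial B(0,r)$ giving the lower bound and an explicit three-ball cover giving the upper bound. The only difference is cosmetic: the paper centers its covering balls at distance $O(d)$ from the origin (at $(d,-d)$, $(d,2d)$, $(-2d,-d)$) and lets $r$ grow, while you push the centers out to distance $r-1$ near alternate corners and exploit the order-3 rotation; both choices satisfy the separation requirement and cover the sphere.
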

\begin{proof} A short calculation shows that for $r$ large enough depending on $d$, 
$$\partial B(0,r) \subset B((d,-d),r-1) \cup B((d,2d), r-1) \cup B((-2d,-d), r-1),$$ so $s(G) \leq 3$. Because $\partial B(0,r)$ contains the points $(0,r)$, $(r,r)$, $(r,0)$, $(0,-r)$, $(-r,-r)$, and $(-r,0)$, no three of which can be covered by a single ball of radius $r-1$, we also get $s(G) \geq 3$. \end{proof}


\section{Infinite vs.\ finite survival numbers}
\label{sec:infinite}

While $s(G)$ is not quasi-isometry invariant, it remains open whether finiteness of $s(G)$ is quasi-isometry invariant. In this section, we present the following results about the finiteness of $s(G)$. First, we show that any vertex-transitive graph with polynomial growth has finite $s(G)$, and any vertex-transitive graph with infinitely many ends has infinite $s(G)$. We then show that finiteness of $s(G)$ does not depend only on the growth function and that not all Liouville graphs have finite survival number, by proving that $s(LL(\Z))<\infty$ and $s(LL(\Z^2)) = \infty$. (Recall that a graph is called Liouville if it admits no non-constant bounded harmonic functions.)

\begin{thm}\label{poly_ends} If $G$ is an infinite vertex-transitive graph with polynomial growth, then $s(G)< \infty$. If $G$ is a vertex-transitive graph with infinitely many ends then $s(G) = \infty$. \end{thm}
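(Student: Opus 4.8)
The statement has two parts, which I would handle separately using the covering characterization of Theorem~\ref{equiv2}.

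\emph{Polynomial growth $\Rightarrow$ $s(G) < \infty$.} The plan is to use the doubling characterization quoted in the preliminaries: there is a constant $C$ such that every ball $B(0,r)$ can be covered by $C$ balls of radius $r/2$. I would iterate this a bounded number of times: applying doubling $\lceil \log_2(r/d') \rceil$-ish times, $B(0,r)$ — and hence $\partial B(0,r)$ — can be covered by $C' = C^{\,O(1)}$... no, that blows up. Instead I would apply doubling just \emph{once} or twice to get a covering of $\partial B(0,r)$ by $C$ (or $C^2$) balls of radius $r/2$ (resp.\ $r/4$); these radii are strictly less than $r$. The one subtlety is the requirement in Theorem~\ref{equiv2} that the $n-1$ centers be pairwise at distance $>d$ from each other and from $0$: if two centers are within $d$, or a center is within $d$ of the origin, I would merge such balls by enlarging one of them by an additive $d$ — since their radius is $r/2$, the enlarged radius $r/2 + d$ is still $< r$ once $r > 2d$. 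After merging, at most $C$ balls remain, with well-separated centers, covering $\partial B(0,r)$ for all large $r$. By Theorem~\ref{equiv2} (contrapositive), $s(G) \le C + 1 < \infty$.

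\emph{Infinitely many ends $\Rightarrow$ $s(G) = \infty$.} Here I want to show that for every $n$ there is a $d$ and infinitely many $r$ such that $\partial B(0,r)$ \emph{cannot} be covered by $n-1$ balls of radius $< r$ with $d$-separated centers. The idea: since $G$ has infinitely many ends, a vertex-transitive graph with infinitely many ends has \emph{continuum many} ends and in particular, for any $n$, one can find $r_0$ and $n$ vertices $w_1, \dots, w_n$ on $\partial B(0, r_0)$ lying in $n$ distinct infinite components of $G - B(0, r_0 - 1)$ (actually distinct \emph{ends}), so that any bi-infinite-geodesic-type ray from $0$ through $w_i$ stays in its own component. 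Then for $r \ge r_0$, pick $u_i \in \partial B(0,r)$ reachable from $0$ by a geodesic through $w_i$; these $n$ points lie in $n$ distinct components of $G - B(0, r_0-1)$, so $d(u_i, u_j) \ge 2(r - r_0 + 1)$, which exceeds $2(r-1) - \text{const}$... more carefully: a ball of radius $r-1$ around any center $c$ can meet at most one of these far-apart components once $r$ is large relative to $r_0$, hence covers at most one $u_i$. Thus $n$ balls of radius $r-1$ are needed, giving $s(G) > n-1$ for every $n$, i.e.\ $s(G) = \infty$; the separation $d$ can be taken $0$, or any fixed value since the $u_i$ are arbitrarily far apart.

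\emph{Main obstacle.} For the polynomial-growth direction, the only real care is making the center-separation condition of Theorem~\ref{equiv2} compatible with the doubling covering — I expect the additive-enlargement trick above to settle it cleanly, but one must check the arithmetic ($r/2 + d < r$, and that merging keeps the count $\le C$). For the infinitely-many-ends direction, the crux is the geometric claim that $n$ geodesic rays heading into $n$ distinct ends eventually separate at linear rate, so that no single ball of radius $< r$ can catch two of their tips; this should follow from the definition of ends (components of complements of balls) plus the observation that a path between points in different components of $G - B(0, r_0 - 1)$ must pass through $B(0, r_0-1)$, forcing its length $\ge 2(r - (r_0-1))$. I would want to double-check that "infinitely many ends" for a \emph{vertex-transitive} graph indeed supplies arbitrarily many ends simultaneously visible from one basepoint at a fixed radius $r_0$ — this is standard (the number of ends is $1$, $2$, or $\infty$, and in the $\infty$ case one gets an infinite binary tree of separating sets coarsely embedded), but it is the step I would write most carefully.
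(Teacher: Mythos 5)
Your first part (polynomial growth) is essentially the paper's argument: start from the doubling cover of $B(0,r)$ by $C$ balls of radius $r/2$ and use the slack up to radius $r-1$ to enforce the $d$-separation required by Theorem~\ref{equiv2} (the paper recenters each ball at a nearby point that is $d$-far from the other centers and inflates it to radius $r-1$, discarding balls that cannot be moved; you merge close centers and inflate to $r/2+d$). One small point to fix in your version: a ball whose center is within $d$ of the origin cannot be handled by ``enlarging one of them'', since a ball centered at or near the origin is not an admissible center; but such a ball lies in $B(0,r/2+d)$ and hence misses $\partial B(0,r)$ entirely once $r>2d$, so it can simply be thrown away. With a greedy choice of a maximal $d$-separated subset of the centers this all goes through and gives $s(G)\leq C$.

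The second part has a genuine gap, and it is exactly at the place where you dismiss the separation parameter: the claim that a ball of radius $r-1$ can cover at most one of your points $u_1,\dots,u_n$, together with the closing remark that ``the separation $d$ can be taken $0$, or any fixed value''. This is false. Your lower bound $d(u_i,u_j)\geq 2(r-r_0+1)$ does not exceed $2(r-1)$ once $r_0\geq 2$, and the failure is real, not just an artifact of the estimate: in the $3$-regular tree (which has infinitely many ends), $\partial B(0,r)$ is covered by the three balls of radius $r-1$ centered at the neighbors of $0$, and more specifically two of your $u_i$'s lying in distinct components of $G-B(0,r_0-1)$ that branch off the same vertex near the origin are both within distance $r-1$ of that vertex. (The paper makes this very point when motivating the definition: without letting $d$ grow, the quantity is bounded by the degree on a tree.) What saves the argument is that any center $c$ with $u_i,u_j\in B(c,r-1)$ for $i\neq j$ must be close to the basepoint: the concatenated path $u_i\to c\to u_j$ meets $B(0,r_0-1)$, and a short computation gives $d(c,0)\leq 2r_0-3$. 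So $d$ must be chosen \emph{depending on $n$}, namely $d\geq 2r_0$ where $r_0$ is large enough that $G-B(0,r_0-1)$ has more than $n-1$ infinite components; then no admissible center covers two of the $u_i$, and the covering criterion of Theorem~\ref{equiv2} applies. This mirrors the paper's choice $d>2r$; the paper in fact bypasses covering altogether and argues directly that for $U\in\U(d,n)$ each $v\in U$ has an entire infinite component of $G-B(v,r)$ containing no other site, and every vertex of that component is strictly closer to $v$ than to any other site, so all $n$ cells are infinite. By contrast, the step you flagged as delicate (seeing $n$ ends simultaneously from one basepoint at some radius $r_0$) is indeed standard and unproblematic; the real issue is the role of $d$.
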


\begin{proof} To prove the first claim, it suffices to show that for all infinite vertex-transitive polynomial-growth graphs, there exists a constant $C$ such that for all $d$ and all $r$, it is possible to cover $\partial B(v,r)$ with at most $C$ balls of radius $r-1$ whose centers are at least $d$ apart from each other and from $v$. We know that there exists a $C$ such that for all $r$ it is possible to cover $B(v,r)$ with $C$ balls of radius $r/2$ \cite{doubling}. Intuitively, we will argue that the extra freedom we get from using balls of radius $r-1$ instead of $r/2$ allows us to move the centers of the balls so that they are all sufficiently far from each other. If there is a ball that cannot be moved in this way, then it must already be covered by other balls, so it can be removed.

Now, fix $d$ and $r$, and suppose that $B(v,r) \subset B(u_1, r/2) \cup \ldots \cup B(u_C,r/2)$. Let $\B = \{B(u_1,r/2), \ldots , B(u_C,r/2)\}$, and wlog suppose that $v  = u_1$. We will modify $\B$ via the following iterative procedure: for $i = 2, \ldots, C$, if there is a point $w \in B(u_i, r/2-1)$ that is at distance at least $d$ from the centers of all the other balls in $\B$, let $\B' = \bigl(\B - B(u_i,r/2)\bigr) \cup B(w, r-1)$. Otherwise, let $\B' = \B - B(u_i, r/2)$. Note that in the first case, $B(u_i, r/2) \subset B(w, r-1)$, and in the second case, $B(u_i, r/2) \subset \B - B_i$, so in both cases $\B'$ covers all the vertices covered by $\B$. Let $\B = \B'$. After repeating this process for $i = 2, \ldots , C$, replace $B(u_1,r/2)$ with $B(u_1, r-1)$.

Now, $\B$ is a set of balls of radius $r-1$ that cover $B(v,r)$, whose centers are all at least $d$ apart from each other, one of which is centered at $v$. The last step of the process is to remove $B(v,r-1)$ from $\B$; note that $B(v, r-1) \cap \partial B(v,r) = \emptyset$, so $\B$ still covers $\partial B(v,r)$. This gives a cover of $\partial B(v,r)$ by balls of radius $r-1$ around vertices at distance at least $d$ from each other and from $v$.

Now we will prove the second statement; assume $G$ is an infinite vertex-transitive graph with infinitely many ends.
Recall that $\U(d,n)$ denotes the set of subsets of $V$ of size $n$ whose elements are at pairwise distance at least $d$ from each other. To show $s(G) = \infty$ it suffices to find, for any $n$, a $d$ such that for all $U \in \U(d,n)$, $I(U) \geq n$. 

Fix $n$, let $v$ be a vertex, choose $r$ to be large enough that there are more than $n-1$ infinite connected components in $G - B(v,r)$, and let $d>2r$. Then for $U \in \U(d,n)$ and $v \in U$, there is an infinite connected component $A$ of $G-B(v,r)$ that does not contain any vertices in $U$. Let $w$ be a vertex in $A$. Any path from $w$ to a $v' \in U$, $v' \neq v$ must pass through $B(v,r)$. The distance from $w$ to $B(v,r)$ is at least $d(w,v) - r$, and the distance from $B(v,r)$ to $v'$ is greater than $r$ because $d(v,v')>2r$, so $d(w,v') > d(w,v) - r + r = d(w,v)$. Thus, $|C(v,U)| = \infty$. Since $U$ and $v$ were arbitrary, $s(G) \geq n$. This holds for all $n$, so $s(G) = \infty$.
\end{proof}

The following theorem concerns the lamplighter graph $LL(G)$, where $G$ is an underlying graph. Each vertex of $LL(G)$ is made up of a configuration $x$ of lit and unlit lamps---i.e.\ an assignment $x(v) \in \{0,1\}$ to each vertex $v$ of $G$ such that $x(v) = 1$ for finitely many $v$---together with a location of the lamplighter---i.e.\ a single vertex $v_L$ of $G$. A vertex $(x,v_L)$ in $LL(G)$ is connected to another vertex $(x', v'_L)$ if $d(v_L, v_L') = 1$ and $x(v) = x'(v)$ for all $v \notin \{v_L, v'_L\}$. In other words, at each step, the lamplighter is allowed to change or not change the state of the current lamp, traverse an edge in $G$, and then change or not change the new current lamp. For all $d$, $LL(\Z^d)$ has exponential growth and is amenable, and for $d \leq 2$, $LL(\Z^d)$ is Liouville. The origin in $LL(\Z^d)$ is the all-zeros configuration with the lamplighter at zero.

\begin{thm} $s(LL(\Z)) < \infty$ and $s(LL(\Z^2)) = \infty$. \end{thm}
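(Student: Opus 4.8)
The plan is to route everything through the covering characterization of Theorem~\ref{equiv2} (together with Lemma~\ref{equiv1}), after first making the geometry of a lamplighter graph explicit. Sweeping the lamplighter across the symmetric difference of the two lamp configurations shows that in $LL(G)$ the distance between $(x,p)$ and $(x',p')$ equals the length of a shortest walk in $G$ that starts at $p$, ends at $p'$, and visits every vertex of $\mathrm{supp}(x)\triangle\mathrm{supp}(x')$. For $G=\Z$ this is $2(\beta-\alpha)-|p-p'|$, where $\alpha\le 0\le\beta$ are the minimum and maximum of $\bigl(\mathrm{supp}(x)\triangle\mathrm{supp}(x')\bigr)\cup\{p,p'\}$; for $G=\Z^2$ it is, up to a bounded factor, the number of lamps that must be toggled plus the travel needed to reach them. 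I would use these formulas throughout to compare distances between explicit vertices, and in particular to decide, for a candidate collection of sites $U$, which Voronoi cells $C(v,U)$ are finite.

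For $s(LL(\Z^2))=\infty$ I would fix $n$, take the separation parameter to be harmless (say $d=1$), and exhibit on $\partial B(0,r)$ — for a sequence of radii $r\to\infty$ — a family of configurations $(\mathbf 1_{R},0)$ where $R$ is a thin ``tentacle'' of roughly $r/2$ lit lamps leaving the origin, so that $d(0,(\mathbf 1_R,0))\approx r$. The crucial difference from the one-dimensional case is that such tentacles can be pushed out along many essentially independent directions (and at many independent depths): using the walk formula one checks that for two tentacles that are geometrically far apart, turning one off and the other on forces a walk of length comparable to $r$, and more importantly that any single vertex lies within distance $r-1$ of only boundedly many of them (a would-be centre either has a small lamp set, in which case it can agree closely with at most a few tentacles, or a large one, in which case turning off the excess lamps is already too expensive). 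Hence $\partial B(0,r)$ cannot be covered by $n-1$ balls of radius $<r$, and Theorem~\ref{equiv2} gives $s(LL(\Z^2))\ge n$; as $n$ was arbitrary, $s(LL(\Z^2))=\infty$. Making ``boundedly many'' quantitative enough to beat every $n$ is the delicate point here.

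The finiteness $s(LL(\Z))<\infty$ is the substantial half. I would prove it by producing an absolute constant $C$ such that for every $d$ there is an infinite set of radii $r$ for which $\partial B(0,r)$ in $LL(\Z)$ is covered by $C$ balls of radius $r-1$ with centres pairwise $\ge d$ apart and $\ge d$ from $0$ — equivalently, so that some vertex has a finite Voronoi cell against $C$ $d$-separated sites. The centres would come in mirror pairs about $0\in\Z$: ``far-lamplighter'' vertices $(\mathbf 0,\pm M)$ and ``far-block'' vertices such as $(\mathbf 1_{[0,K]},q)$ and its reflection, with $M,K,q$ depending only on $d$. Given a vertex $w$ far from $0$, one classifies it by the positions of its extreme lit lamps relative to the lamplighter and to $0$ and checks, case by case with the distance formula, that one prescribed centre is strictly closer to $w$ than $0$; the mechanism is that a far-block centre already has a long interval of lamps lit in the relevant direction, so reaching $w$ costs about $2R-\mathrm{const}$ rather than $2R$. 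I expect the main obstacle to be exactly this case analysis, and specifically the awkward configurations in which a short block of lamps sits near the origin together with a single far-away lit lamp that the lamplighter has travelled out to: one must argue that a \emph{bounded} number of centres — rather than a number growing with $d$ — still captures all of these, and it is here that the freedom in Theorem~\ref{equiv2} to use only an \emph{infinite} set of good radii (not all radii) and the exact choice of $C$ and of the centres are likely to be essential.
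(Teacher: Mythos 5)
Your reduction to Theorem~\ref{equiv2} is the right frame, but the $LL(\Z^2)$ half has a fatal error: you declare the separation parameter ``harmless'' and take $d=1$, whereas $d$ is precisely what carries the content of the covering criterion. With $d=1$ the centers are only required to be distinct and different from $0$, and then a bounded cover always exists: every $u\in\partial B(0,r)$ is at distance exactly $r-1$ from the first vertex of a geodesic from $0$ to $u$, so the (boundedly many) neighbors of $0$ cover $\partial B(0,r)$ by balls of radius $r-1$ in \emph{any} bounded-degree graph. In particular your key claim that ``any single vertex lies within distance $r-1$ of only boundedly many'' tentacles is false: a neighbor of $0$ lying on the common initial segment of the tentacles is within $r-1$ of all of them. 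So for $n-1$ larger than the degree of $LL(\Z^2)$ the non-covering statement you want is simply false at $d=1$, and no lower bound on $s$ follows. The paper's proof chooses $d$ large depending on $n$ (through the parameters $m$ and $M$) and spends all its effort on covering balls whose centers are $d$-far from $0$: it shows the lamplighter coordinate $w$ of such a center must satisfy $|w|<2m$, so its distance from $0$ must come from a lamp lit at distance at least $M$; that lamp cannot lie on both of two lamp-along-geodesic configurations which are disjoint outside $B(0,m-1)$, and extinguishing it is too expensive within $r-1$ steps. Without this use of $d$ your argument cannot be repaired.

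For $s(LL(\Z))<\infty$ your plan (an absolute-constant cover of $\partial B(0,r)$ by radius-$(r-1)$ balls with $d$-separated centers) is the paper's, but your proposed centers fail outright, not just in the awkward case you flag. Take the boundary vertex $u$ with lamps lit exactly at $1$ and at $F$ and lamplighter at $F$, so $d(0,u)=F=r$. Your own walk formula gives $d\bigl((\mathbf 0,M),u\bigr)=F-2+M>r-1$ once $M\ge d\ge 2$ (and worse for $-M$), and $d\bigl((\mathbf 1_{[0,K]},q),u\bigr)=F+|q|\ge F>r-1$ for every $q$, with the reflected centers farther still; so no admissible choice of $M,K,q$ covers $u$. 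The missing idea is structural, not a matter of finishing a case analysis: the only slack available is the single step gained by matching the \emph{first move} of the target's geodesic, so each center must agree with the post-first-step state (lamplighter at $\pm 1$ and the value of lamp $0$), and the $d$-separation must be bought not by displacing the lamplighter but by lighting a block of $d$ lamps placed away from $0$ yet inside the interval of length about $r/2$ that every geodesic of length $r$ in $LL(\Z)$ necessarily sweeps. Staggering these blocks over $[1,4d]$ and $[-4d,-1]$ for the four possible first steps yields the paper's eight pairwise $d$-separated centers, each within $r-1$ of every vertex of $\partial B(0,r)$, since the remaining $r-1$ steps of the target's geodesic pass over the block and all toggles are free en route.
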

\label{lamplighter}
\begin{proof}
We begin with $LL(\Z)$. Each element of $\partial B(0,r)$ in $LL(\Z)$ can be obtained by a sequence of $r$ moves that start at the origin, and when $r$ is large enough, during these $r$ moves the lamplighter must visit either all of the integers $1, \ldots , 4d$, or all of $-1, \ldots, -4d$; there is no geodesic path in $LL(\Z)$ for which the lamplighter stays in an interval smaller than half the length of the path. Thus, each vertex in $\partial B(0,r)$ in $LL(\Z)$ is within distance $r-1$ of one of the following vertices, depending on what $v_L$ and $x(0)$ are after the first step, and whether integers $1, \ldots 4d$ or $-1, \ldots , -4d$ were visited. (Let $x(v) = 0$ when not specified.)
\begin{itemize}
\item $v_L = 1$, $x(0) = 0$, $x(1) = \cdots = x(d) = 1$;
\item $v_L = 1$, $x(0) = 0$, $x(-1) = \cdots = x(-d) = 1$;
\item $v_L = 1$, $x(0) = 1$, $x(d+1) = \cdots = x(2d) = 1$;
\item $v_L = 1$, $x(0) = 1$, $x(-d-1) = \cdots = x(-2d) = 1$;
\item $v_L = -1$, $x(0) = 0$, $x(2d+1) = \cdots = x(3d) = 1$;
\item $v_L = -1$, $x(0) = 0$, $x(-2d-1) = \cdots = x(-3d) = 1$;
\item $v_L = -1$, $x(0) = 1$, $x(3d+1) = \cdots = x(4d) = 1$;
\item $v_L = -1$, $x(0) = 1$, $x(-3d-1) = \cdots = x(4d) = 1$.
\end{itemize}
These eight vertices are also at distance $d$ from each other and from the origin. 

Now we prove the second claim. Given any geodesic path $\gamma$ in $\Z^2$ from $0$ to $u$ for some $u \in \partial B(0,r)$, the vertex $(x_\gamma,u)$ of $LL(\Z^2)$ is in $\partial B(0,r) \subset LL(\Z^2)$, where $x_\gamma(v) = 1$ for $v \in \gamma$ and 0 otherwise.

Fix $n$, and let $m$ be such that $\partial B(0,m)\geq n$ in $\Z^2$. Then for $r \geq m$, it is possible to choose a set $\Gamma$ of $n$ geodesic paths from $0$ to $\partial B(0,r)$ in $\Z^2$ that are pairwise disjoint outside of $B(0,m-1)$.  If $n-1$ balls of radius $r-1$ cover $\partial B(0,r)$ in $LL(\Z^2)$, then one of them, say $B((y,w),r-1)$, must cover some pair $(x_\gamma , u)$ and $(x_{\gamma'}, u')$, with $\gamma, \gamma' \in \Gamma$. Since $\gamma$ and $\gamma'$ are disjoint on $\partial B(0,m)$, the path in $LL(\Z^2)$ from $(y,w)$ to either $(x_\gamma, u)$ or $(x_\gamma', u')$ must involve the switching of a lamp at distance $m$ from 0. So the lamplighter must walk from $w$ to $\partial B(0,m)$ to $\partial B(0,r)$ in $r-1$ steps, a path of length at least $|w|+r - 2m$. For this to be possible, $|w| < 2m$. 

When $r$ is large enough, we can further require the following of $\Gamma$: there exists an integer $M>m$ such that for any two paths $\gamma$ and $\gamma'$ in $\Gamma$ with endpoints $u$ and $u'$ in $\partial B(0,r)$, for any vertex $w$ with $|w| < 2m$ and any $v$ with $|v|>M$, either the path from $w$ to $v$ to $u$ has length greater than $n-1$, or the path from $w$ to $v$ to $u'$ has length greater than $n-1$. So now, choose $d$ large enough so that if $|w| < 2m$, then for $|(y,w)|>d$ to hold, $y(v)$ must be one for some $|v| \geq M$. Let $p$ denote the shortest path from $w$ to $v$ to $u$, and $p'$ denote the shortest path from $w$ to $v$ to $u'$. If $p$ and $p'$ have the same length, then this length is longer than $r-1$, and since $|v| \geq m$, $v$ is either not contained in $\gamma$ or not contained in $\gamma'$; we will say it is not contained in $\gamma$. If $p$ and $p'$ have different lengths, suppose without loss of generality that $p$ is longer than $p'$. Then again we have $|p| > r-1$, and $d(v,u) > d(v,u')$, and since this does not hold for any vertex on $\gamma$, $v$ must not be on $\gamma$. Since $v$ is not on $\gamma$ and $y(v) = 1$, any path from $(y,w)$ to $(x_\gamma, u)$ must involve the lamplighter passing through $v$ and switching its state to 0. But because $|p| > r-1$, this cannot be done in $r-1$ moves.

Thus, for every $n$ there exists a $d$ such that for all $r$ large enough, $\partial B(0,r)$ cannot be covered by $n-1$ balls of radius $r-1$ whose centers are at distance at least $d$ from 0.

\end{proof}

In \cite{stathyp}, the sprawl of an infinite Cayley graph $G$ is defined as
$$E(G) = \lim_{n \rightarrow \infty} \frac{1}{|\partial B(0,n)|^2} \sum_{x,y \in \partial B(0,n)} \frac{1}{n}d(x,y).$$
The maximal possible value of $E(G)$ is two, and is achieved by hyperbolic groups; a graph $G$ with $E(G)= 2$ is called {\em statistically hyperbolic}. 

If for all $n$ there exist infinitely many $r$ for which there are $n$ elements of $\partial B(0,r)$ which are at pairwise distance $2r$ from each other, then $s(G) = \infty$. If a graph is statistically hyperbolic, then the asymptotic average distance of two elements of $\partial B(0,r)$ is $2r$, so it seems natural to wonder whether statistical hyperbolicity implies $s(G) = \infty$. However, $LL(\Z)$ is shown in \cite{stathyp} to be statistically hyperbolic, so Theorem~\ref{lamplighter} shows that statistical hyperbolicity does not imply $s(G) = \infty$. 


\section{Open Problems}
\label{sec:openquestions}

A number of interesting open questions remain:
\begin{enumerate}

\item Is finiteness of $s(G)$ a quasi-isometry invariant? 

\item If $H$ is a subgroup or a quotient of $G$, is $s(H) \leq s(G)$?

\item Is the survival number related to the minimum rate of divergence of geodesics? In particular, does hyperbolicity imply $s(G) = \infty$? 

\item Does non-Liouville imply $s(G) = \infty$? 

If for all $n$ there exist infinitely many $r$ for which there are $n$ elements of $\partial B(v,r)$ which are at pairwise distance $2r$ from each other, then $s(G) = \infty$. If $X_i$ is the $i$-th position of the simple random walk on a Cayley graph starting at 0, then 
$$c := \lim_{n \rightarrow \infty} \frac{1}{n} |X_n|$$ 
exists almost surely \cite{LLN}. The random walk is called ballistic if $c>0$; a graph is non-Liouville iff the simple random walk is ballistic \cite{KaimWoess}. So on a non-Liouville graph, after walking $(1/c)r$ steps starting at $x$ for large $r$, the vertex $y$ reached has $d(x,y) = r+o(r)$ almost surely. If $z$ is a vertex reached by an independent copy of the simple random walk starting at $x$, then $z$ is also at distance $r+o(r)$ from $x$ almost surely. By reversability of the simple random walk and vertex transitivity, we can couple so that $z$ was reached after a simple random walk of $(2/c)r$ steps from $y$, ensuring that $d(z,y) = 2r+o(r)$ almost surely. This gives us an arbitrarily large set of vertices at distance $r+o(r)$ from $x$ and $2r+o(r)$ from each other. 

However, the covering condition is not robust enough for this fact to be directly useful. For example, in $LL(\Z)$, $\partial B(0,r)$ can be covered by finitely many balls of radius $r-1$ (Theorem~\ref{lamplighter}), but because $LL(\Z)$ has exponential growth, $|\partial B(0, r+\omega(1))| = |B(0,r-1)| \cdot \omega(1)$, and so $\partial B(0,r+\omega(1))$ cannot be covered by a constant number of balls of radius $r-1$ (recall that a function $f$ is in $\omega(1)$ if $\lim_{x \rightarrow \infty} f(x) = \infty$). Thus, it remains open whether all non-Liouville graphs have an infinite survival number. (If a non-Liouville Cayley graph admits a symmetric random walk such that the harmonic measure on the sphere is uniform or close to uniform, however, then this argument shows that the graph is statistically hyperbolic.)

\item More generally, for which infinite vertex-transitive graphs $G$ does $s(G) = \infty$?

\item Does $s(G) = 2$ imply $G$ has linear growth?

\item The survival number is the minimum number of Voronoi cells that must be infinite; what is the maximum number of Voronoi cells that can be infinite? Is this supremum infinity for all non-linear vertex-transitive graphs? 

\item Recall that 
$$\T(d,n) = \{T \subset V: n \leq |T| < \infty \mbox{ and }d(v_1, v_2)>d\ \forall v_1, v_2 \in T\}$$ 
and
$$I(T) = \bigl|\{v \in T : |C(v,T)| = \infty\}\bigr|,$$ 
and let 
$$t(G) = \lim_{n \rightarrow \infty} \min\{I(T) : |T| = n\}.$$ 
Does $t(G) = s(G)$? If not, what are the properties of $t(G)$?

\end{enumerate}

\section{Acknowledgements}
Thanks to Itai Benjamini for suggesting this definition of survival number and for many helpful conversations. Thanks also to Gady Kozma, Yakir Reshef, Romain Tessera, and Adam Timar for helpful comments.

\bibliographystyle{plain}
\bibliography{survival}

\begin{thebibliography}{1}

\bibitem{competition}
I.~Benjamini.
\newblock Survival of the weak in hyperbolic spaces, a remark on competition
  and geometry.
\newblock {\em Proceedings of the American Mathematical Society},
  130(3):723--726, 2001.

\bibitem{3caret}
I.~Benjamini, H.~Finucane, and R.~Tessera.
\newblock On the scaling limit of vertex transitive graphs with large
  diameters, 2011.
\newblock In preparation.

\bibitem{Borsuk}
K.~Borsuk.
\newblock Drei satze uber die n-dimensionale euklidische sph\"{a}re.
\newblock {\em Fundamenta Mathematicae}, 20:177--190, 1933.

\bibitem{doubling}
R.~Coifman and G.~Weiss.
\newblock {\em Analyse harmonique non-commutative sur certains espaces
  homog\`{e}nes}, volume 242.
\newblock Springer-Verlag, 1971.

\bibitem{stathyp}
M.~Duchin, S.~Leli\`{e}vre, and C.~Mooney.
\newblock {\em Statistical hyperbolicity in groups}, 2011.
\newblock arXiv:1104.4460.

\bibitem{KaimWoess}
V.~Kaimanovich and W.~Woess.
\newblock Boundary and entropy of space homogeneous markov chains.
\newblock {\em The Annals of Probability}, 30(1):323--363, 2002.

\bibitem{LLN}
A.~Karlsson and F.~Ledrappier.
\newblock On laws of large numbers for random walks.
\newblock {\em The Annals of Probability}, 34(5):1693--1706, 2006.

\bibitem{LusSch}
L.~Lyusternik and S.~Shnirelman.
\newblock Topological methods in variational problems.
\newblock {\em Issledowatelskii Institut Matematiki i Mechaniki pri O. M. G. U.
  (Moscow)}, 1930.

\bibitem{meier}
J.~Meier.
\newblock {\em Groups, Graphs and Trees: An Introduction to the Geometry of
  Infinite Groups}.
\newblock Cambridge University Press, 2008.

\end{thebibliography}

\end{document}